\documentclass{article}

\usepackage[utf8]{inputenc}
\usepackage{amsmath}
\usepackage{amssymb}
\usepackage{yfonts}
\usepackage{faktor}
\usepackage{amsfonts}
\usepackage{amsthm}
\usepackage{bbding}
\usepackage{bm}
\usepackage{graphicx}
\usepackage{fancyvrb}
\usepackage{xargs}

\title{Minimization Principle for Polynomial-Time Predicates and Forcing in Bounded Arithmetic}
\author{
Mykyta Narusevych\\
Faculty of Mathematics and Physics, Charles University\\
Sokolovsk\'a 83, 186 75 Prague, Czech Republic
}
\date{}

\begin{document}

\newcommand{\T}{\textsf{T}^1_2}
\newcommand{\Talpha}{\textsf{T}^1_2(\alpha)}
\newcommand{\Salpha}{\textsf{S}^1_2(\alpha)}
\newcommand{\Dalpha}{\Delta^b_1(\alpha)}

\newcommand{\Tmin}{\textsf{TMIN}(\alpha)}

\newcommand{\M}{\mathbb{M}}
\newcommand{\N}{\mathbb{N}}
\newcommand{\I}{\mathbb{I}}
\newcommand{\J}{\mathbb{J}}
\newcommand{\calS}{\mathcal{S}}
\newcommand{\bbO}{\mathbb{O}}

\newcommand{\TOUR}{\textsf{TOUR}}
\newcommand{\MIN}{\textsf{MIN}}

\newcommand{\deff}[1]{\textbf{#1}}
\newcommand{\stepmark}[1]{\underline{\textbf{#1}}}
\newcommand{\todo}[1]{\textbf{[TODO: #1]}}

\newcommand{\forcingFrame}{(P, \preceq)}

\newcommand{\trig}{\triangleleft}

\theoremstyle{plain}
\newtheorem{theorem}{Theorem}[section]
\newtheorem{lemma}[theorem]{Lemma}
\newtheorem*{lemma*}{Lemma}
\newtheorem{corollary}[theorem]{Corollary}
\newtheorem{proposition}[theorem]{Proposition}

\theoremstyle{definition}
\newtheorem{definition}[theorem]{Definition}
\newtheorem{question}[theorem]{Question}
\newtheorem{conjecture}[theorem]{Conjecture}
\newtheorem{hypothesis}[theorem]{Hypothesis}
\newtheorem{problem}[theorem]{Problem}
\newtheorem*{example}{Example}

\theoremstyle{remark}
\newtheorem*{remark}{Remark}
\newtheorem*{fact}{Fact}

\maketitle

\begin{abstract}
\sloppy
    We formulate a factorization hypothesis for linear orders definable by oracle polynomial-time machines. Assuming this hypothesis, we provide an approach to constructing a nonstandard model satisfying the minimization scheme for polynomial-time predicates while violating the tournament principle for graphs with polynomial-time edge relation.
\end{abstract}

\section{Introduction}

Unprovability of combinatorial statements in weak fragments of bounded arithmetic is a topic with a long history \cite[Chapter 11]{krajicek95}. While the weak theory $\Talpha$ and some of its extensions come with known unprovability results \cite{riis93,atseriasthapen14,narusevych25}, there are no general methods able to tackle theories above $\textsf{T}^2_2(\alpha)$, except the \textit{switching lemma} \cite{ajtai88} and \textit{lifting theorems} \cite{chiarikrajicek98} (which also use the switching lemma). However, mentioned methods seem difficult to adapt to fine-grained separations between successive theories $\textsf{T}^i_2(\alpha)$ and $\textsf{T}^{i+1}_2(\alpha)$: while those theories are known to be distinct \cite[Section 10.4]{krajicek95}, no formulas of \textit{fixed complexity} are known to separate all of them.

In this paper, we follow on an earlier work of analyzing extensions of $\Talpha$ and identify a computational condition on polynomial-time definable linear orderings, the \textit{Factorization Hypothesis}. Our main result is that, over the base theory $\Talpha$, the \textit{tournament principle} is not implied by the \textit{minimization principle} \cite{chiarikrajicek98} restricted to those polynomial-time linear orders which satisfy this hypothesis. In fact, we conjecture that the Factorization Hypothesis is satisfied by all polynomial-time linear orders.

\section{Preliminaries}

\subsection{Bounded Arithmetic}

We start with a brief overview of bounded arithmetic. For further details, refer to \cite{buss85} and \cite{krajicek95}.

Our primary focus is the relativized bounded arithmetic theory $\Talpha$, with $\alpha$ being a unary predicate symbol. The language of this theory, in addition to $\alpha$, includes the usual arithmetical symbols $0, 1, +, \cdot, \leq$, together with unary operations $\lfloor\frac{\cdot}{2}\rfloor$ and $|\cdot|$, and the binary function symbol $\#$. The symbol $|\cdot|$ denotes bit length, i.e., $|0| = 0$ and, for $x>0$, $|x| = \lfloor \log_{2} x \rfloor + 1$. The symbol $\#$ denotes the binary \textit{smash function}, standardly defined as $2^{|x|\cdot |y|}$. We denote this language by $L_{\textsf{S}_2}(\alpha)$, with $L_{\textsf{S}_2}$ denoting $L_{\textsf{S}_2}(\alpha) \setminus \{\alpha\}$.

A formula is called \deff{sharply bounded} if it is bounded and the bounding terms are of the form $|t|$. The class of sharply bounded formulas is denoted by $\Delta^b_0(\alpha)$.

The class $\Sigma^b_1(\alpha)$ is defined as the smallest set of bounded formulas extending $\Delta^b_0(\alpha)$ and closed under $\land$, $\lor$, sharply bounded quantification, and bounded existential quantification. The class $\Pi^b_1(\alpha)$ is defined as the set of formulas $\varphi$ such that $\neg \varphi \in \Sigma^b_1(\alpha)$, equivalently, as the closure of $\Delta^b_0(\alpha)$ under $\land$, $\lor$, sharply bounded quantification, and bounded universal quantification.

The theory $\Talpha$ is axiomatized by a finite number of true statements expressing the most basic properties of the symbols, such as commutativity of $+$ and $\cdot$, or that $|2x+1| = |x|+1$, forming the so-called \textsf{BASIC} theory; see \cite[Definition 5.2.1]{krajicek95}. This is augmented by the induction axiom scheme for all $\Sigma^b_1(\alpha)$-formulas. Formulas are allowed to contain free variables.

For a formula $\varphi(x, \overline{y})$, denote by $\textsf{PIND}(\varphi)$ the following sentence:
\begin{align*}
    \forall \overline{y} \: [(\varphi(0, \overline{y}) \land \forall x (\varphi(\lfloor \frac{x}{2} \rfloor, \overline{y}) \to \varphi(x, \overline{y}))) \to \forall x \varphi(x, \overline{y})].
\end{align*}

The corresponding axiom scheme is called \deff{polynomial induction}. The theory $\Salpha$ is defined as the already mentioned \textsf{BASIC} theory, not treated in the current paper, augmented by the polynomial induction scheme for $\Sigma^b_1(\alpha)$-formulas. 

A $\Sigma^b_1(\alpha)$-formula $\varphi(\overline{x})$ belongs to the class $\Dalpha$ if there is a $\Pi^b_1(\alpha)$-formula $\psi(\overline{x})$ so that $\Salpha \vdash \forall \overline{x} \: \varphi(\overline{x}) \equiv \psi(\overline{x})$. We note that it is more common to denote the previously defined formula class as $\Dalpha$ \textit{in} $\Salpha$ to emphasize the theory over which the equivalence is established.

\begin{theorem}[{Buss \cite{buss85}, see also \cite[Section 7]{krajicek95}}]
\label{buss}
    A relation $R(\overline{x})$ on numbers is computable in polynomial time with $\alpha$-oracle access if and only if it is definable by a $\Delta^b_1(\alpha)$-formula $\varphi(\overline{x})$.
\end{theorem}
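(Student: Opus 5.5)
The proof has two directions, and I would organize it around $\Sigma^b_1(\alpha)$-definability of functions in $\Salpha$ rather than predicates alone.

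\textbf{Easy direction.} Suppose $R$ is defined by a $\Dalpha$-formula $\varphi$. Then $R$ is in $\textsf{NP}^\alpha$, because a $\Sigma^b_1(\alpha)$-formula can be evaluated by guessing witnesses for its bounded existential quantifiers. The bounding terms are polynomial in the lengths of the inputs, sharply bounded quantifiers range over polynomially many values, and atomic formulas, including $\alpha(t)$, are checked in polynomial time with oracle queries. Dually, the equivalent $\Pi^b_1(\alpha)$-formula $\psi$ puts $R$ in $\textsf{coNP}^\alpha$. Since $\Salpha$ is true in the standard model with any interpretation of $\alpha$, the equivalence $\varphi\equiv\psi$ holds for every oracle. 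So $R\in \textsf{NP}^\alpha\cap\textsf{coNP}^\alpha$, which is not yet $\textsf{P}^\alpha$.

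\textbf{Strengthening the easy direction.} To get $R\in\textsf{P}^\alpha$, I would use the fact that the $\Salpha$-provable equivalence gives a proof of $\forall \overline{x}\,(\varphi\lor\neg\psi)$, i.e.\ of $\forall\overline{x}\,\exists y\,\theta(\overline{x},y)$ with $\theta\in\Sigma^b_1(\alpha)$ after prenexing. I would then invoke Buss's witnessing theorem: from a free-cut-free sequent-calculus proof, by induction on the proof, extract a polynomial-time oracle function that finds a witness to either the $\varphi$ side or the $\neg\psi$ side. The main steps of the witnessing argument are the cut-elimination to free-cut-free form and the $\Sigma^b_1(\alpha)$-$\textsf{PIND}$ step. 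In the $\textsf{PIND}$ step, the witness is iterated along $\lfloor x/2^i\rfloor$ for only $|x|$ steps, which keeps the function polynomial time. Running this function decides $R$, since it tells us which disjunct holds. I expect this witnessing argument to be the main obstacle.

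\textbf{Converse direction.} Given an oracle machine $M$ running in time $p(|\overline{x}|)$, I would encode computations as sequences of configurations of length polynomial in $|\overline{x}|$, coded by a number bounded by a term built from $\#$. Let $\textsf{Comp}(\overline{x},w)$ express that $w$ is an accepting computation, with each step verified locally and each oracle answer checked against $\alpha$. This can be written $\Delta^b_0(\alpha)$ using sharply bounded quantification over step indices. Then $\varphi := \exists w\le t\,\textsf{Comp}$ is $\Sigma^b_1(\alpha)$, and $\psi := \forall w\le t\,(\textsf{Comp}'(\overline{x},w)\to \text{accept})$ is $\Pi^b_1(\alpha)$, where $\textsf{Comp}'$ drops the acceptance condition. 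It remains to prove $\varphi\equiv\psi$ in $\Salpha$. This requires showing that a computation exists and is unique. Uniqueness follows by $\textsf{PIND}$/$\textsf{LIND}$ on the step index, showing that two valid computations agree on every prefix. Existence follows by $\Sigma^b_1(\alpha)$-$\textsf{LIND}$ on the statement ``there is a valid partial computation of length $i$''.
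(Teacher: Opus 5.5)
The paper does not prove this theorem; it cites it from Buss and Kraj\'{\i}\v{c}ek, and your outline is essentially the standard argument there: Buss's witnessing theorem applied to $\Salpha \vdash \forall\overline{x}\,(\varphi\lor\neg\psi)$ for one direction, and a formalized computation predicate with $\Salpha$-provable existence and uniqueness of computations for the other. A few points are minor but need tightening: for ``it tells us which disjunct holds'' to be a polynomial-time procedure, the witnessed matrix must itself be polynomial-time checkable (use the strict $\Sigma^b_1(\alpha)$ normal form or Buss's witness predicate, not a $\Sigma^b_1(\alpha)$ matrix $\theta$); the $\textsf{PIND}$ step stays polynomial time because witnesses are bounded by terms, not merely because there are $|x|$ iterations; and in this language $\textsf{Comp}$ is naturally $\Delta^b_1(\alpha)$ in $\Salpha$ rather than literally $\Delta^b_0(\alpha)$, which is all you need.
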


\subsection{Forcing}
\label{forcing definition}

Our treatment of forcing follows \cite{atseriasmuller15}. The reader may consult the mentioned paper for a more general treatment of the subject; here, we focus only on features relevant to the current work.

We fix a countable nonstandard model of true arithmetic in $L_{\textsf{S}_2}$, denoted by $\M$. Throughout this subsection, formulas, terms, and so on are allowed to contain parameters from $\M$, unless stated explicitly. In the next section, we restrict the parameter space.

A \deff{forcing frame} is a partially ordered set $\forcingFrame$ (in general, this poset need not be related to $\M$ in any way). Its elements are called \deff{conditions}. When $p \preceq q$, we say that $p$ \deff{extends} $q$. Two conditions $p, q$ are said to be \deff{compatible}, denoted $p \| q$, if they have a common extension; that is, if there exists $r \preceq p,q$. Otherwise, such conditions are said to be \deff{incompatible}, denoted $p \perp q$.

A binary relation $\Vdash$ between conditions and $L_{\textsf{S}_2}(\alpha)(\M)$-sentences is called a \deff{forcing} if it satisfies the following properties for all sentences $\varphi, \psi$, all formulas $\chi(x)$, and all conditions $p, q$.
\begin{itemize}
    \item $p \Vdash \neg \varphi \:$ if and only if  $\: \forall q \preceq p: \: q \nVdash \varphi$;
    \item $p \Vdash \varphi \land \psi \:$ if and only if  $p \Vdash \varphi$ and $p \Vdash \psi$;
    \item $p \Vdash \varphi \lor \psi \:$ if and only if $\: p \Vdash \neg(\neg \varphi \land  \neg \psi)$;
    \item $p \Vdash \forall x \chi(x) \:$ if and only if, for all $a \in \M$, $\: p \Vdash \chi(a)$;
    \item $p \Vdash \exists x \chi(x) \:$ if and only if $\: p \Vdash \neg \forall x \neg \chi(x)$;
    \item if $p \Vdash \varphi$ and $q \preceq p$, then $q \Vdash \varphi$;
    \item if $\forall q \preceq p \: \exists r \preceq q \: r \Vdash \varphi$, then $p \Vdash \varphi$.
\end{itemize}

A forcing $\Vdash$ is called \deff{conservative} if, for any sentence $\varphi$ not involving the symbol $\alpha$, and any condition $p$, $p \Vdash \varphi$ if and only if $\varphi$ is true in $\M$.

To consider only \textit{reasonable} $\Vdash$, we impose the following additional requirement on every conservative forcing relation considered below. This is essentially a stronger and simpler version of \cite[Lemma 2.18]{atseriasmuller15}: given a condition $p$, and closed terms $s,t$, if $\M \vDash t=s$ and $p \Vdash \alpha(t)$, then $p \Vdash \alpha(s)$.

A subset $G \subseteq P$ is called a \deff{filter} if it satisfies
\begin{itemize}
    \item for any $p, q \in G$, there exists $r \in G$ so that $r \preceq p, q$;
    \item for any $p \preceq q$, if $p \in G$, then $q \in G$.
\end{itemize}
A subset $D \subseteq P$ is called \deff{dense} if any $p \in P$ can be extended by some $q \in D$, and a filter $G$ is called \deff{generic} if it intersects all sets from a selected countable collection of dense subsets of $P$. The full definition of genericity \cite[Definition 2.9]{atseriasmuller15} is not relevant for the current discussion. What matters is that such filters exist \cite[Lemma 2.12]{atseriasmuller15}, and that they satisfy Theorem~\ref{forcing theorem} below.

\begin{theorem}[{Truth Lemma \cite[Theorem 2.19]{atseriasmuller15}, and Forcing Completeness \cite[Corollary 2.20 (a)]{atseriasmuller15}}]
\label{forcing theorem}
    Let $\Vdash$ be a conservative forcing relation, and let $G$ be a generic filter. Then there exists an $L_{\textsf{S}_2}(\alpha)$-expansion of $\M$, denoted by $\M[G]$, satisfying, for any $L_{\textsf{S}_2}(\alpha)(\M)$-sentence $\varphi$,
    \begin{align*}
        \M[G] \vDash \varphi \: \text{ if and only if } \: p \Vdash \varphi \: \text{ for some } p \in G.
    \end{align*}

    Moreover, for any $p \in P$ and any $L_{\textsf{S}_2}(\alpha)(\M)$-sentence $\varphi$,
    \begin{align*}
        p \Vdash \varphi \: \text{ if and only if } \: \M[H] \vDash \varphi \: \text{ for all generic $H$ containing }p.
    \end{align*}
\end{theorem}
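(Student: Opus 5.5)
The plan is to build $\M[G]$ directly from $G$ and then prove the first claim (the Truth Lemma) by induction on the complexity of $\varphi$. The second claim (Forcing Completeness) will then follow from the first together with the existence of generic filters through any given condition.

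\textbf{Construction of $\M[G]$.} The $L_{\textsf{S}_2}$-part of $\M[G]$ is taken to be $\M$ itself. The predicate $\alpha$ is interpreted by
\begin{align*}
\alpha^{\M[G]} = \{a \in \M : p \Vdash \alpha(a) \text{ for some } p \in G\}.
\end{align*}
The requirement that $\M \vDash t=s$ and $p \Vdash \alpha(t)$ imply $p\Vdash\alpha(s)$ makes this well defined on closed terms. The countable collection of dense sets used in the definition of genericity is chosen to contain, for every $L_{\textsf{S}_2}(\alpha)(\M)$-sentence $\varphi$, the set
\begin{align*}
D_\varphi = \{q : q \Vdash \varphi \text{ or } q \Vdash \neg\varphi\}.
\end{align*}
Since $\M$ is countable, there are only countably many such sentences. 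Each $D_\varphi$ is dense: given $p$, either some $q \preceq p$ forces $\varphi$, or, by the negation clause, $p$ itself forces $\neg\varphi$. Existence of filters meeting these sets is the standard Rasiowa--Sikorski enumeration argument, as in \cite[Lemma 2.12]{atseriasmuller15}.

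\textbf{Truth Lemma by induction.} The cases are taken in the following order.
\begin{itemize}
\item \emph{Atomic formulas without $\alpha$.} These are handled by conservativity: every condition forces such a sentence exactly when it is true in $\M$, so this is compatible with $\M[G]$ extending $\M$.
\item \emph{Atomic formulas $\alpha(t)$.} These hold by the definition of $\alpha^{\M[G]}$ together with the term-substitution requirement.
\item \emph{Conjunction.} This case uses the directedness of the filter.
\item \emph{Negation.} If $p\in G$ forces $\neg\varphi$ and $\M[G]\vDash\varphi$, then by induction some $q\in G$ forces $\varphi$. Taking a common extension $r \in G$, we get that $r$ forces both $\varphi$ and $\neg\varphi$, which is impossible. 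Conversely, if no $p\in G$ forces $\neg\varphi$, then $G$ meets $D_\varphi$ in some $q$ that forces $\varphi$, and the induction hypothesis applies.
\item \emph{Disjunction and $\exists$.} These reduce to the negation case, since they are defined through $\neg$ and $\land$.
\item \emph{Universal quantifier.} This is the delicate case. If $\M[G]\vDash\forall x\chi(x)$, then every instance $\chi(a)$ is forced by some member of $G$, but a single $p\in G$ forcing all instances is not immediate.
\end{itemize}

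For the universal case, I would argue via negation density. By $D_{\forall x\chi}$, some $q\in G$ forces either $\forall x\chi$ or $\neg\forall x\chi$; it remains to exclude the second option. Suppose $q\Vdash\neg\forall x\chi$. I would then add to the genericity family, for each such sentence, the set of conditions that either force $\neg\chi(a)$ for some $a$ or are incompatible with $q$. This set is dense: below any $r\| q$, a common extension does not force $\forall x \chi$, so it fails some instance $\chi(a)$, and then some further extension forces $\neg\chi(a)$. Genericity then puts into $G$ a condition forcing $\neg\chi(a)$ for some $a$, contradicting the truth of $\chi(a)$ in $\M[G]$ by the negation case. Choosing exactly these dense sets is the main obstacle; it is precisely what \cite[Definition 2.9]{atseriasmuller15} is designed to supply.

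\textbf{Forcing Completeness.} For the direction from forcing to truth: if $p\Vdash\varphi$ and $H\ni p$ is generic, the Truth Lemma gives $\M[H]\vDash\varphi$. For the converse, suppose $p\nVdash\varphi$. By the last forcing axiom, there is some $q\preceq p$ such that no extension of $q$ forces $\varphi$; in other words, $q\Vdash\neg\varphi$. Take a generic $H\ni q$; it exists because the enumeration argument can start from $q$. Then $H\ni p$ by upward closure of filters, and $\M[H]\vDash\neg\varphi$, which is the required counterexample.
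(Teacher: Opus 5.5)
The paper gives no proof of this statement; it imports both halves from Atserias--M\"uller. Your outline is the standard argument behind those results: interpret $\alpha$ by what $G$ forces, induct on the $\neg,\land,\forall$-skeleton, and derive completeness from the density clause plus a generic filter through some $q \preceq p$ with $q \Vdash \neg\varphi$. Everything checks out except the universal case as you set it up.

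The problem is that your auxiliary dense set $\{r : r \perp q\} \cup \{r : r \Vdash \neg\chi(a) \text{ for some } a\}$ depends on $q$, and $q$ is chosen from $G$ only after $G$ exists. For the argument to be legitimate, you would have to put such a set into the genericity family for every $q$ with $q \Vdash \neg\forall x\,\chi(x)$. The family is then indexed by conditions as well as sentences. The paper allows $P$ to be unrelated to $\M$, hence possibly uncountable, while defining genericity via a \emph{countable} collection of dense sets. So the Rasiowa--Sikorski existence argument is no longer available in general, both for the truth lemma and for the completeness half. It does happen to work for the countable frame of definable partial orientations used later in the paper, but not for the statement as given.

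The fix is a dense set that does not mention $q$. For each sentence $\forall x\,\chi(x)$ take
\[
E_{\forall x\chi} = \{r : r \Vdash \forall x\,\chi(x)\} \cup \{r : r \Vdash \neg\chi(a) \text{ for some } a \in \M\}.
\]
It is dense by exactly your reasoning: if no extension of $p$ forces $\forall x\,\chi(x)$, then $p \nVdash \chi(a)$ for some $a$, and the last forcing clause gives $r \preceq p$ with $r \Vdash \neg\chi(a)$.

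Now suppose $\M[G] \vDash \forall x\,\chi(x)$ and $r \in G \cap E_{\forall x\chi}$. The second alternative is impossible: by the induction hypothesis some member of $G$ forces $\chi(a)$, and a common extension in $G$ would force both $\chi(a)$ and $\neg\chi(a)$. Hence $r \Vdash \forall x\,\chi(x)$, and the detour through $D_{\forall x\chi}$ is unnecessary. The family $\{D_\varphi\} \cup \{E_{\forall x\chi}\}$ is indexed by sentences only, so it is countable, and the rest of your proof goes through unchanged.
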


\section{$\TOUR$, $\MIN$, and the Transfer Lemma}

Let $n$ be a nonstandard number in $\M$, and let $\I$ be the \textit{small canonical cut} over $n$, i.e., the set of numbers in $\M$ bounded above by parameter-free terms in $n$. As usual, the initial interval $[0, \dots, n-1]$ is denoted by $[n]$.

Recall the \textit{pairing function} $\langle \cdot, \cdot \rangle$, defined as
\[
    \langle x, y \rangle := \frac{(x + y) (x + y + 1)}{2} + y.
\]
This is a bijective mapping between $\N \times \N$ and $\N$, and the same term defines, internally in $\M$, a bijection between $\M \times \M$ and $\M$. With such a function, we can turn the unary $\alpha(\cdot)$ into a binary relation $\alpha(\langle \cdot, \cdot \rangle)$, which we denote by $\alpha$ for simplicity as well.

\begin{definition}[{\cite[12.1]{krajicek95}}]
    A \deff{tournament} is a directed graph $(V, E)$ with exactly one directed edge between any two distinct nodes.

    A set $X \subseteq V$ is said to be \deff{dominating} if, for any vertex $w \in V \setminus X$, there is a $v \in X$ so that $(v, w) \in E$.
\end{definition}

It is a well-known fact that any tournament with $m$ vertices contains a dominating set of size $\leq |m| + 1$; see, e.g., \cite[2.5]{megiddovishkin88}.

The following formula $\TOUR(\alpha, u)$ expresses the above fact formally for graphs with edge-relation defined by $\alpha$, with $u$ being a free variable:
\begin{gather*}
    \exists x, y < u \: (x \neq y) \land (\neg \alpha(x, y) \land \neg \alpha(y, x)) \\
    \lor \\
    \exists x, y < u \: (x \neq y) \land (\alpha(x, y) \land \alpha(y, x)) \\
    \lor \\
    \exists X \subseteq [u] (|X| \leq |u| + 1) \:
    \forall x < u \:
    \bigl(x \in X \lor \exists y < u \: (y \in X \land \alpha(y, x))\bigr).
\end{gather*}

The sets $X$ quantified in the above expression are represented as sequences of length at most $|u| + 1$, or equivalently by padded sequences of length $|u|+1$ together with a length parameter. Thus they can be encoded as numbers from $\M$ of size $O(u^{|u|+1})$.

Let $\forcingFrame$ denote the forcing frame of definable in $\M$ partial orientations of $K_n$, the complete graph on $n$ vertices, of size polynomial in $|n|$. Conditions of such a frame can be identified with sets of ordered pairs of distinct elements of $[n]$, containing at most one of $(a,b)$ and $(b,a)$ for each unordered pair $\{a,b\}$. We then define a conservative forcing relation $\Vdash$ in the natural way: $p \Vdash \alpha(a, b)$ if and only if $(a, b) \in p$, for numbers $a, b \in \M$.

The following can be proved in the exact same way as similar statements in \cite[Section 4]{atseriasmuller15}.

\begin{theorem}
    A generic expansion $\M[G]$ violates $\TOUR(\alpha, n)$ and satisfies the least number principle for all $\Sigma^b_1(\alpha)$-formulas with parameters from $\I$.

    As a corollary, the structure $\I[G]$, defined as the substructure of $\M[G]$ with domain $\I$, is a model of $\Talpha$ violating $\TOUR(\alpha, n)$.
\end{theorem}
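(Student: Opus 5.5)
The plan has three parts, following \cite[Section 4]{atseriasmuller15}: a density argument for $\neg\TOUR(\alpha,n)$, a forcing argument for the least number principle, and an appeal to the fact that $\Talpha$ is equivalent to the $\Sigma^b_1(\alpha)$ least number principle over \textsf{BASIC}.

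\textbf{Violating $\TOUR(\alpha,n)$.} By the Truth Lemma (Theorem~\ref{forcing theorem}), it suffices to show that the empty condition forces $\neg\TOUR(\alpha,n)$. This splits into three density statements.

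First, for distinct $a,b<n$, the set of conditions orienting the pair $\{a,b\}$ is dense. Adding one edge keeps a condition definable and of size polynomial in $|n|$. This kills the first two disjuncts of $\TOUR(\alpha,n)$, and by the definition of $\Vdash$ on atoms no condition forces both $\alpha(a,b)$ and $\alpha(b,a)$.

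Second, fix $X\subseteq[n]$ with $|X|\le|n|+1$ and a condition $p$. Since $p$ touches only $|n|^{O(1)}$ vertices and $n$ is nonstandard, some $x<n$ outside $X$ is untouched by $p$. Extend $p$ by the edges $(x,y)$ for all $y\in X$; this adds only $|n|+1$ edges. The extension forces that $x$ is neither in $X$ nor dominated by $X$, and these edges can never be reversed.

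Third, since the quantifier over $X$ ranges over elements of $\M$, genericity and the Truth Lemma give $\M[G]\vDash\neg\TOUR(\alpha,n)$.

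\textbf{The least number principle.} Let $\varphi(x)$ be a $\Sigma^b_1(\alpha)$-formula with parameters from $\I$, and suppose $p\Vdash\varphi(a)$ with $a\in\I$. The goal is a $q\preceq p$ and some $b\le a$ with $q\Vdash\varphi(b)\wedge\forall c<b\,\neg\varphi(c)$.

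The key device is that forcing a $\Sigma^b_1(\alpha)$ statement with parameters in $\I$ reduces to an $\M$-definable condition. I will show that $q\Vdash\varphi(c)$ holds iff some extension $r\preceq q$ consists of $q$ plus a small set of edges, polynomial in $|n|$, that witnesses $\varphi(c)$ as a partial oracle. This is the standard fact that a bounded existential witness together with a polynomial-time verification query only $|n|^{O(1)}$ oracle bits, since all parameters lie in $\I$. Moreover, a condition forces $\neg\varphi(c)$ iff no small extension witnesses $\varphi(c)$, and this predicate is definable in $\M$.

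Then I take $b$ to be the least $c\le a$ such that some small extension of $p$ witnesses $\varphi(c)$. This minimum exists because $\M$ satisfies true arithmetic, so it has full definable induction. Let $q$ be such a witnessing extension of $p$.

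To finish, I must check that $q\Vdash\neg\varphi(c)$ for every $c<b$. Suppose instead that some $r\preceq q$ forced $\varphi(c)$. Then $\varphi(c)$ would have a small witness set $W\subseteq r$. Here lies the subtle point: minimality of $b$ was defined relative to extensions of $p$, and $p\cup W$ is an extension of $p$, but it need not be polynomial-size; nor need it be consistent. I resolve this by noting that $W\subseteq r$, and $r$ extends $p$, so $p\cup W\subseteq r$ is a consistent partial orientation. Since $W$ has size $|n|^{O(1)}$, the union $p\cup W$ also has polynomial size. Hence it is a small extension of $p$ witnessing $\varphi(c)$ with $c<b$, contradicting minimality. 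Density plus the Truth Lemma then give the least number principle in $\M[G]$.

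\textbf{The corollary.} $\I$ is closed under the $L_{\textsf{S}_2}$ operations, and sharply bounded and $\Sigma^b_1(\alpha)$ formulas with parameters in $\I$ are absolute between $\I[G]$ and $\M[G]$ because their bounds stay in $\I$. \textsf{BASIC} holds in $\I[G]$, and the $\Sigma^b_1(\alpha)$ least number principle is equivalent to $\Sigma^b_1(\alpha)$ induction. Hence $\I[G]\vDash\Talpha$. Finally, $\neg\TOUR(\alpha,n)$ transfers down to $\I[G]$: it is a bounded formula whose quantifier bounds, including $n^{|n|+1}$ for the sets $X$, lie in $\I$.

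\textbf{Main obstacle.} The main work is the reduction of forcing for $\Sigma^b_1(\alpha)$ formulas to small witness sets. In particular, it requires showing that forcing a negation is equivalent to having no small consistent witness, which needs a careful induction on formula structure.
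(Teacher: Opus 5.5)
Your proposal is correct in substance and takes essentially the same approach the paper intends. The paper gives no proof of its own but defers to \cite[Section 4]{atseriasmuller15}, and your argument is that one: density for $\neg\TOUR(\alpha,n)$; definability in $\M$ of $p\Vdash\neg\varphi(c)$ as ``no witness of $\varphi(c)$ of size $\le |n|^k$, with $k$ fixed by $\varphi$, is compatible with $p$''; minimization in $\M$; and absoluteness of bounded formulas down to $\I$.

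One slip needs fixing. Your stated characterization of $q\Vdash\varphi(c)$ (``some extension of $q$ is $q$ plus a small witness'') actually characterizes $q\nVdash\neg\varphi(c)$. For instance, $\emptyset$ has the extension $\{(0,1)\}$ witnessing $\alpha(0,1)$ but does not force $\alpha(0,1)$, and $\emptyset\Vdash\alpha(0,1)\lor\alpha(1,0)$ although it contains no witness. So in your last step the witness $W$ lies in some $r'\preceq r$, not in $r$ itself. This is harmless, since still $p\cup W\subseteq r'$. The detour is also unnecessary: minimality of $b$, together with your correct characterization of forcing negations, already gives $p\Vdash\neg\varphi(c)$ for every $c<b$, and this passes to $q$ by monotonicity.
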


\begin{remark}
    It can be shown that $\I[G]$ preserves the \textit{pigeonhole principle} for $\Delta^b_1(\alpha)$-formulas as well; see \cite[Section 5.1]{narusevych25}.
\end{remark}

The main goal of the current work is to present a condition on $\Dalpha$-definable linear orders, stated formally in Hypothesis~\ref{factorization hypothesis}, that implies that $\I[G]$ satisfies the full \textit{minimization scheme} for such orders. Moreover, we conjecture that all $\Dalpha$-definable linear orders obey this condition. As a corollary, our conjecture implies that the tournament principle is not provable from the minimization principle, a statement which currently seems to lie outside the scope of known separation techniques.

\begin{definition}[{\cite{chiarikrajicek98}}]
\label{min definition}
    Given a formula $\varphi(x, y)$, possibly with additional free variables, and a variable $u$ distinct from those of $\varphi$, define $\MIN(\varphi, u)$ as the disjunction of the following five formulas:
    \begin{align*}
        \MIN_0(\varphi,u) &:=
        \exists x < u \: \varphi(x, x),\\
        \MIN_1(\varphi,u) &:=
        \exists x, y < u \: (x \neq y) \land \varphi(x, y) \land \varphi(y ,x),\\
        \MIN_2(\varphi,u) &:=
        \exists x, y < u \: (x \neq y) \land \neg \varphi(x, y) \land \neg \varphi(y, x),\\
        \MIN_3(\varphi,u) &:=
        \exists x, y, z < u \: (x \neq y) \land (y \neq z) \land (x \neq z) \land \varphi(x, y) \land \varphi(y, z) \land \neg \varphi(x, z),\\
        \MIN_4(\varphi,u) &:=
        \exists x < u \: \forall y < u \: (x = y) \lor \varphi(x, y).
    \end{align*}
    Hence,
    \[
        \MIN(\varphi,u) = \bigvee_{i=0}^4 \MIN_i(\varphi,u).
    \]

    The corresponding axiom scheme is called the \deff{minimization scheme}.

    We denote by $\Tmin$ the theory $\Talpha$ augmented by the minimization scheme for all $\Delta^b_1(\alpha)$-formulas.
\end{definition}

\begin{remark}
    It follows from the Riis' theorem \cite{riis93} that $\Tmin$ is strictly stronger than $\Talpha$. In fact, by considering lexicographical ordering on witnesses, it is easy to show that, over a sufficiently strong base theory, for example $\Salpha$, the minimization principle for $\Delta^b_1(\alpha)$-formulas implies the least number principle for $\Sigma^b_1(\alpha)$-formulas.
\end{remark}

To formulate the main conjecture, Hypothesis~\ref{factorization hypothesis}, we begin by directly analyzing the following question. Recall that $\forcingFrame$ refers to the forcing frame of definable partial orientations on $K_n$ with sizes polynomial in $|n|$.

\begin{question}
\label{main question}
    Does $\I[G] \vDash \Tmin$?
\end{question}

Following the standard forcing argumentation, we pick an arbitrary condition $p$, a number $m \in \I$, and a $\Delta^b_1(\alpha)$-formula $\varphi(x, y)$ with parameters from $\I$, with the goal of showing that $p$ can be extended to a condition $q$ forcing $\MIN(\varphi, m)$.

First, recall a fundamental property of $\Vdash$ \cite[Lemma 2.6 (4)]{atseriasmuller15}, stating that $q \nVdash \psi$ if and only if there exists $r \preceq q$ such that $r \Vdash \neg \psi$, for all conditions $q$ and any sentence $\psi$. Thus, we may assume
\[
    p \Vdash \bigwedge_{i=0}^3 \neg \MIN_i(\varphi, m),
\]
i.e., $p$ forces that $\varphi(x, y)$ defines a strict linear ordering on $[m]$. Otherwise, there is an extension $q \preceq p$ forcing the negation of this conjunction, hence forcing one of the first four disjuncts of $\MIN(\varphi,m)$, and we are done.

It is then enough to show that $p \nVdash \neg \MIN_4(\varphi, m)$. Equivalently, by the forcing clauses from Section~\ref{forcing definition}, it is enough to find a number $m_0 \in [m]$ and a condition $q \preceq p$ such that
\[
    q \Vdash \forall x < m \: \bigl((x = m_0) \lor \varphi(m_0, x)\bigr).
\]
A priori, this is not a simple task. Forcing the universal sentence
\[
    \forall x < m \: \bigl((x = m_0) \lor \varphi(m_0, x)\bigr)
\]
is equivalent to forcing all $\Delta^b_1(\alpha)$-sentences $\varphi(m_0, m_1)$ for all $m_1 \neq m_0 < m$ simultaneously. By itself, this might not be possible to accomplish via conditions of small size.

The only way to overcome the above problem is to use, in a non-trivial way, the fact that $\varphi$ \textit{is forced} to define a strict linear ordering.

Below, we treat the cut $\I$ as an $L_{\textsf{S}_2}$-substructure of $\M$.

\begin{definition}
    Let $\mathcal{O}$ be a class of total orientations of $K_n$. Neither the class nor its members are required to be definable in $\M$. For $O \in \mathcal{O}$, we denote by $(\I, \alpha^O)$ the expansion of $\I$ obtained by interpreting $\alpha$ as $O$.

    Given a condition $q$ and an $L_{\textsf{S}_2}(\alpha)$-sentence $\psi$ with parameters from $\I$, we say that $q$ \deff{forces} $\psi$ \deff{with respect to} $\mathcal{O}$, denoted by $q \Vdash^*_{\mathcal{O}} \psi$, if and only if
    \[
        (\I, \alpha^O) \vDash \psi
    \]
    for every $O \in \mathcal{O}$ such that $O \supset q$.
\end{definition}

\begin{remark}
    We deliberately use the notation $\Vdash^*_\cdot$, instead of $\Vdash_\cdot$, to emphasize that, depending on the class $\mathcal{O}$, the relation $\Vdash^*_\mathcal{O}$ may not satisfy the conditions of forcing from Section~\ref{forcing definition}. Nevertheless, the forcing notation is convenient and will be used throughout.
\end{remark}

From this perspective, the forcing completeness theorem, Theorem~\ref{forcing theorem}, equates the standard forcing relation $\Vdash$ with the relative forcing relation $\Vdash^*_\mathcal{O}$ for $\mathcal{O}$ being the class of all orientations induced by generic filters. The other $\mathcal{O}$ of obvious interest is the class of all internally coded total orientations of $K_n$. Equivalently, these are the orientations definable in $\M$ with parameters from $\M$. We denote this class by $\mathcal{D}$.

Now, given some $q$ and a bounded sentence $\psi$, we want to express $q \Vdash^*_\mathcal{D} \psi$ as an $L_{\textsf{S}_2}$-sentence ``$q \Vdash^*_\mathcal{D} \psi$'' with parameters from $\M$, so that $q \Vdash^*_\mathcal{D} \psi$ if and only if $\M \vDash$ ``$q \Vdash^*_\mathcal{D} \psi$''. In more detail, we first apply a Paris--Wilkie style propositional translation to $\psi$; see \cite[Section 10]{krajicek19}. This gives a propositional formula $\langle \psi \rangle$ on $\binom{n}{2}$ variables (while $\alpha$ is binary on $[n]$, since we restrict ourselves only to orientations of $K_n$, it is enough to consider only unordered pairs of distinct elements of $[n]$, hence the size $\binom{n}{2}$). Note that such a formula can be encoded as a number in $\M$. The condition $q$ is then identified with the corresponding partial assignment to those variables denoted as $\tilde{q}$. Since every internally coded total assignment is definable in $\M$ from its code, the fact that $q \Vdash^*_{\mathcal{D}} \psi$ is equivalent to the statement that $\langle \psi \rangle \vert_{\tilde{q}}$ is a tautology. The latter can be expressed in $\M$, which we denote as ``$q \Vdash^*_\mathcal{D} \psi$''. Notice that the sentence ``$q \Vdash^*_\mathcal{D} \psi$'' contains parameters outside of $\I$, as we are quantifying over the set of all total assignments to $\binom{n}{2}$-many variables.

\begin{lemma}[Transfer Lemma]
\label{transfer lemma}
    Let $\psi$ be a $\Pi^b_1(\alpha)$-sentence with parameters from $\I$, and let $q$ be a condition. Then the following are equivalent:
    \begin{enumerate}
        \item $q \Vdash \psi$;
        \item $q \Vdash^*_{\mathcal{D}} \psi$;
        \item  $\M \vDash ``q \Vdash^*_{\mathcal{D}} \psi"$.
    \end{enumerate}
\end{lemma}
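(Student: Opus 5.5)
We prove the cycle of implications $(2)\Leftrightarrow(3)$, $(2)\Rightarrow(1)$ and $(1)\Rightarrow(2)$. The equivalence $(2)\Leftrightarrow(3)$ is essentially the construction preceding the lemma. Since $\psi$ is bounded with parameters in $\I$, its Paris--Wilkie translation $\langle\psi\rangle$ is a propositional formula coded in $\M$. For each internally coded total orientation $O\supset q$, the truth of $\psi$ in $(\I,\alpha^O)$ coincides with the evaluation of $\langle\psi\rangle$ at $O$ computed inside $\M$. This holds because all quantifiers of $\psi$ range below elements of $\I$, and $\I$ is an initial segment of $\M$. Since $O$ is definable from its code, quantifying over codes inside $\M$ is the same as quantifying over $\mathcal{D}$ externally. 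We formally check this by induction on the structure of $\psi$.

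For $(2)\Rightarrow(1)$, argue by contradiction. Suppose $q\nVdash\psi$. By \cite[Lemma 2.6 (4)]{atseriasmuller15} there is $r\preceq q$ with $r\Vdash\neg\psi$. Now $\neg\psi$ is $\Sigma^b_1(\alpha)$ with parameters in $\I$, so we would like a single witness. Write $\neg\psi$ as $\exists w\le t\,\theta(w)$ with $\theta$ sharply bounded (up to the usual normal-form manipulations), and extend $r$ to $r'$ forcing $\theta(w_0)$ for some $w_0\in\I$, using the forcing clauses for $\exists$. A sharply bounded formula with parameters in $\I$ queries only $|t|^{O(1)}$ many values of $\alpha$, polynomially many in $|n|$. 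So we can further extend $r'$ to a condition $r''$ that decides all those atoms (still a condition of polynomial size), and by conservativity $\M\vDash\theta(w_0)$ under the partial assignment $r''$. Any internally coded total orientation $O\supseteq r''$ (for instance, $r''$ extended by the lexicographic orientation) then satisfies $(\I,\alpha^O)\vDash\neg\psi$ while $O\supset q$, contradicting $(2)$. The delicate point is that sharply bounded quantifiers and the $\Sigma^b_1$ prefix interleave, so we will instead do a more careful induction: a condition forcing a $\Sigma^b_1(\alpha)$ sentence with $\I$-parameters has an extension of polynomial size that ``certifies'' it, meaning every total orientation extending it satisfies the sentence. This certificate statement is the core of the step.

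For $(1)\Rightarrow(2)$, suppose $O\in\mathcal{D}$ with $O\supset q$ and $(\I,\alpha^O)\vDash\neg\psi$. Pick a witness $w_0\in\I$ for the existential form. The set of $\alpha$-atoms queried when evaluating $\theta(w_0)$ is of size polynomial in $|n|$, so $q\cup O|_{\text{those atoms}}$ is a condition $r\preceq q$. By the certificate property, $r$ forces $\neg\psi$, contradicting $q\Vdash\psi$. This use of sharp boundedness, which confines the relevant atoms to a polynomial-size definable set, is what makes both directions work.

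The main obstacle is establishing cleanly that a sharply bounded formula with parameters in $\I$ depends on only polynomially many atoms, and that its truth is decided identically by forcing, by $\I$-expansions, and by $\M$-internal evaluation. We would prove this by induction on formula complexity, using conservativity and the reasonableness requirement on $\Vdash$ (closed terms equal in $\M$ are interchangeable inside $\alpha$) for the atomic case.
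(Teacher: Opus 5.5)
Your proposal is correct and follows essentially the paper's proof. Both bring $\psi$ (or $\neg\psi$) into a normal form with one bounded quantifier in front of a matrix involving only polynomially many (in $|n|$) $\alpha$-atoms. Both then prove $(2)\Rightarrow(1)$ by extending a condition forcing $\neg\psi$ to one that fixes a witness and decides all atoms of that instance, prove $(1)\Rightarrow(2)$ by adding to $q$ the restriction of the internally coded orientation to those atoms, and take $(2)\Leftrightarrow(3)$ from the preceding discussion. The only difference is cosmetic: the paper expands the matrix into clauses of $|n|^C$ literals, so forcing the negated clause already decides every atom, whereas your sharply bounded matrix needs the explicit ``decide all atoms'' extension plus the small lemma that forcing equals truth for fully decided sharply bounded sentences. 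Since the paper likewise relies on the normal-form translation preserving both $\Vdash$ and $\Vdash^*_{\mathcal{D}}$, your first argument already gives the certificate property, and the separate structural induction you float is unnecessary.
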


\begin{proof}
    The argument rests on the observation that the sentence $\psi$ can be put into a CNF whose clauses have size polynomial in $|n|$. More formally, after expanding sharply bounded quantifiers and applying the Paris--Wilkie translation, $\psi$ can be represented, for the purposes of forcing, in the form
    \begin{align*}
        \forall x < a \exists y < |n|^C \: \theta(x, y),
    \end{align*}
    for some fixed standard constant $C$ and some $a \in \M$, where each $\theta(x,y)$ is an $\alpha$-literal with parameters from $\I$. Concretely, we may assume that there is an $L_{\textsf{S}_2}$-definable function $f(\cdot,\cdot)$ with values in $\I$, and an $L_{\textsf{S}_2}$-formula $\theta^*(x,y)$ not containing $\alpha$, such that
    \[
        \theta(x,y) \equiv
        \bigl(\theta^*(x,y)\land \alpha(f(x,y))\bigr)
        \lor
        \bigl(\neg\theta^*(x,y)\land \neg\alpha(f(x,y))\bigr).
    \]
    The translation is such that $p \Vdash \psi$ if and only if $p \Vdash \psi^*$, and similarly for $\Vdash^*_\mathcal{D}$, for any condition $p$.
    
    Assume $q \nVdash \psi^*$. We find $r \preceq q$ such that
    \[
        r \Vdash \exists x<a \: \forall y < |n|^C \: \neg \theta(x,y).
    \]
    Extending further if necessary, we may fix a particular $b<a$ such that
    \[
        r \Vdash \forall y < |n|^C \: \neg \theta(b, y).
    \]
    By the forcing clauses from Section~\ref{forcing definition}, and the definition of $\theta$, it follows that, for each $c<|n|^C$, the condition $r$ decides the corresponding $\alpha$-literal so as to force $\neg\theta(b,c)$. Altogether, this implies
    \[
        r \Vdash^*_{\mathcal{D}} \forall y < |n|^C \: \neg \theta(b, y),
    \]
    and hence $q \nVdash^*_{\mathcal{D}} \psi^*$.

    Conversely, assume $q \nVdash^*_{\mathcal{D}} \psi^*$. We pick an internally coded total orientation $D$ extending $q$ such that
    \[
        (\I, \alpha^D) \vDash \neg \psi^*.
    \]
    Then there is a particular $b<a$ such that
    \[
        (\I, \alpha^D) \vDash \forall y < |n|^C \: \neg \theta(b, y).
    \]
    Since $D$ is internally coded in $\M$, we can pick the $|n|^C$-sized subset of $D$ already witnessing all oracle answers needed to verify
    \[
        \forall y < |n|^C \: \neg \theta(b, y).
    \]
    Such a subset is a condition from $\forcingFrame$ compatible with $q$, and its union with $q$ clearly forces $\neg \psi^*$.

    The equivalence between points 2 and 3 follows from the discussion preceding Lemma~\ref{transfer lemma}.
\end{proof}

The main utility of the Transfer Lemma, Lemma~\ref{transfer lemma}, is to equate a statement $q \Vdash \psi$ to ``$q \Vdash^*_\mathcal{D} \psi$'', for $\Pi^b_1(\alpha)$-sentences $\psi$, which can then be analyzed inside the ground model $\M$. This is in line with how the least number principle for $\Sigma^b_1(\alpha)$ is forced in \cite{atseriasmuller15}; see the notion of \textit{partial definability} of $\Vdash$ in the mentioned paper. The main difference here is that we identify a class of \textit{globally definable structures}, i.e., totally defined interpretations of $\alpha$, for which the transfer holds, instead of relying on \textit{small partial substructures}, as is usually the case.

\begin{corollary}
\label{line of attack}
    Assume $p$ is a condition such that
    \[
        p \Vdash \bigwedge_{i=0}^3 \neg \MIN_i(\varphi, u).
    \]
    Then,
    \[
        ``p \Vdash^*_\mathcal{D} \bigwedge_{i=0}^3 \neg \MIN_i(\varphi, u)''
    \]
    holds in $\M$.
    
    Furthermore, assume $p$ can be extended to $q$ such that
    \begin{align*}
        \M \vDash ``q \Vdash^*_\mathcal{D} \forall y < m \: (m_0 = y) \lor \varphi(m_0, y)"
    \end{align*}
    for a particular $m_0 < m$. Then $q \Vdash \MIN_4(\varphi, m)$.
\end{corollary}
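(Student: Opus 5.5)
Both parts follow from the Transfer Lemma, Lemma~\ref{transfer lemma}, once we check that the sentences involved are $\Pi^b_1(\alpha)$ with parameters from $\I$. Here $\varphi$ is a $\Delta^b_1(\alpha)$-formula with parameters from $\I$, and $u$ is instantiated by $m \in \I$. For each $i \le 3$, $\neg \MIN_i(\varphi,m)$ is a universal quantification bounded by $m$ over a Boolean combination of instances of $\varphi$. Using the $\Pi^b_1(\alpha)$-form of $\varphi$ for positive occurrences and the negated $\Sigma^b_1(\alpha)$-form for negative ones, each $\neg\MIN_i(\varphi,m)$ becomes equivalent to a $\Pi^b_1(\alpha)$-sentence. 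The conjunction over $i\le 3$ is then again $\Pi^b_1(\alpha)$. Likewise
\[
\forall y<m\:\bigl((m_0=y)\lor\varphi(m_0,y)\bigr)
\]
is $\Pi^b_1(\alpha)$ after replacing $\varphi$ by its $\Pi^b_1(\alpha)$ equivalent; here $m_0<m$ lies in $\I$, so all parameters remain in $\I$.

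We must make sure the replacement respects both $\Vdash$ and $\Vdash^*_{\mathcal{D}}$. The equivalence between the $\Sigma^b_1$ and $\Pi^b_1$ forms of $\varphi$ is provable in $\Salpha$. Every $(\I,\alpha^D)$ with $D\in\mathcal{D}$ is a model of $\Salpha$, since $D$ is internally coded in $\M$, a model of true arithmetic. Hence the replacement is harmless for $\Vdash^*_{\mathcal{D}}$. For $\Vdash$ we use Theorem~\ref{forcing theorem}: every generic $\I[G]$ satisfies $\Talpha\supseteq\Salpha$, so by forcing completeness the two forms are forced by exactly the same conditions. One could instead take the step that $\varphi$'s $\Sigma^b_1$/$\Pi^b_1$ definitions agree as implicit in how the paper uses $\Delta^b_1(\alpha)$ formulas; the argument above is the safer route.

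For the first claim, assume $p\Vdash\bigwedge_{i\le3}\neg\MIN_i(\varphi,m)$. By the preceding paragraph this is forcing a $\Pi^b_1(\alpha)$-sentence, so the implication $(1)\Rightarrow(3)$ of Lemma~\ref{transfer lemma} yields $\M\vDash ``p\Vdash^*_{\mathcal{D}}\bigwedge_{i\le3}\neg\MIN_i(\varphi,m)"$.

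For the second claim, apply $(3)\Rightarrow(1)$ of Lemma~\ref{transfer lemma} to $q$ and the sentence $\forall y<m\,((m_0=y)\lor\varphi(m_0,y))$, obtaining $q\Vdash\forall y<m\,((m_0=y)\lor\varphi(m_0,y))$. By the forcing clauses, a forced instance implies that its existential closure is forced. Concretely, if $q\Vdash\chi(m_0)$, then no extension of $q$ forces $\forall x\neg\chi(x)$, hence $q\Vdash\neg\forall x\neg\chi(x)$, which is $q\Vdash\exists x\chi(x)$. The bound $m_0<m$ is a true $\alpha$-free statement and is therefore forced by conservativity. Combining the two gives $q\Vdash\MIN_4(\varphi,m)$.

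The only delicate point is the complexity bookkeeping: $\MIN_i$ mixes positive and negative occurrences of $\varphi$, so we genuinely need both the $\Sigma^b_1$ and $\Pi^b_1$ definitions of $\varphi$, together with the justification that swapping between them is sound for both $\Vdash$ and $\Vdash^*_{\mathcal{D}}$.
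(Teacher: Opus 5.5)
Your proof is correct and follows the paper's route. The paper states this corollary without a separate proof, as an immediate consequence of Lemma~\ref{transfer lemma}: direction $(1)\Rightarrow(3)$ gives the first claim, and direction $(3)\Rightarrow(1)$, followed by the routine passage from the forced instance at $m_0$ to the forced existential $\MIN_4(\varphi,m)$, gives the second. Your extra bookkeeping makes explicit a step the paper leaves implicit: you replace $\varphi$ by its $\Pi^b_1(\alpha)$ or $\Sigma^b_1(\alpha)$ form according to polarity, justifying this for $\Vdash^*_{\mathcal{D}}$ by $(\I,\alpha^D)\models\Salpha$, and for $\Vdash$ by $\I[G]\models\Talpha$ together with forcing completeness (via absoluteness of bounded formulas between $\I[G]$ and $\M[G]$).
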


\section{Factorization Hypothesis}

By Theorem~\ref{buss}, the relation $\varphi(x, y)$ can be decided by a Turing machine $T[\alpha](x, y)$ which queries an oracle $\alpha$, and which runs in time polynomial in the bit lengths of the input numbers. We choose this notation to emphasize that $\alpha$ is not a regular input, but an oracle queried by $T$ during the computation.

Since parameters of $\varphi$ are from $\I$, and $x, y$ are restricted to the interval $[m]$ with $m \in \I$, the runtime of $T[\alpha](x, y)$ is bounded above by a polynomial in $|n|$. Oracles are identified with total assignments to a fixed set of $\binom{n}{2}$ variables. Given such a total assignment $s$, we denote by $T[\alpha_s](x, y)$ the result of substituting $s$ for $\alpha$. Then the condition $p$ is identified with a partial assignment to the same variables, and the class $\mathcal{D}$ is identified with all internally coded total assignments.

The statement
\[
    ``p \Vdash^*_\mathcal{D} \bigwedge_{i=0}^3 \neg \MIN_i(\varphi, m)''
\]
now means that the Turing machine $T[\alpha_s](x, y)$, for all total assignments $s$ agreeing with $p$, defines a strict linear order on $[m]$. Finding $q \preceq p$ satisfying
\[
    ``q \Vdash^*_\mathcal{D} \forall y < m \: (m_0 = y) \lor \varphi(m_0, y)''
\]
amounts to assigning polynomially many additional variables so as to fix the smallest element of the ordering defined by $T[\alpha](x, y)$ on $[m]$.

This leads to the following problem, with $p$ fixed to $\emptyset$ without loss of generality.

\begin{question}
\label{ordering problem}
    Let $n$ be a number, and let $T[\alpha](\overline{x}, x, y)$ be a polynomial-time oracle Turing machine, with oracle being a $0/1$ assignment to the variables encoded as integers from $[\binom{n}{2}]$. A query of $T$ is of the form $a \mapsto ?$ for a number $a \in [\binom{n}{2}]$, with the oracle answering $0$ or $1$ consistently with previous answers.

    Let $\overline{a}$ be a tuple of numbers all polynomial in $|n|$, let $m$ be a number quasi-polynomial in $n$.

    Assume that, for all total assignments $s$, $T[\alpha_s](\overline{a}, x, y)$ defines a strict linear order on $[m]$. Denote by $m_s$ the minimal element of such an ordering.

    Is there a partial assignment $q$ of size polynomial in $|n|$, so that $m_s = m_l$ for any pair of total assignments $s, l$ agreeing with $q$?
\end{question}

\begin{theorem}
\label{one query theorem}
    Question~\ref{ordering problem} has a positive answer for $T$ which can query the oracle at most once during each computation.
\end{theorem}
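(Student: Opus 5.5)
The idea is to reduce the one-query restriction to a statement about a partition of the Boolean cube into subcubes, and then use a measure (counting) argument to find a short subcube. Throughout, $p=\emptyset$ as in Question~\ref{ordering problem}. Everything takes place inside $\M$, which satisfies true arithmetic, so it is enough to prove the corresponding finite combinatorial statement for all (standard) parameters. That statement then holds in $\M$ as well.

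\textbf{Step 1: each comparison is a single literal.} Fix distinct $x,y<m$. The computation $T[\alpha](\overline{a},x,y)$ queries at most once, and the first query cannot depend on the oracle. So either no variable is queried, or exactly one variable $v_{x,y}$ is queried and the output is a function of its value. Hence the predicate ``$x<y$'' (that is, $T$ accepts $(x,y)$), as a function of the total assignment $s$, is one of four things: the constant $0$, the constant $1$, the literal $v_{x,y}$, or the literal $\neg v_{x,y}$. Consequently, for each $x<m$, the event ``$x=m_s$'' is the conjunction $t_x := \bigwedge_{y\neq x} [x<y]$. This is a conjunction of at most $m-1$ literals and constants. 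So $t_x$ is either unsatisfiable (it contains the constant $0$ or two complementary literals) or it defines a subcube $C_x$ of $\{0,1\}^{\binom{n}{2}}$ of codimension $k_x$, where $k_x$ is the number of distinct variables occurring in $t_x$.

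\textbf{Step 2: measure counting.} By hypothesis, $T[\alpha_s]$ defines a strict linear order on $[m]$ for every total $s$, so every $s$ has exactly one minimum $m_s$. Therefore the nonempty cubes $C_x$ partition the whole cube, and counting assignments gives
\[
    \sum_{x<m,\ C_x\neq\emptyset} 2^{-k_x} = 1 .
\]
There are at most $m$ summands, so some $x_0$ satisfies $2^{-k_{x_0}}\ge 1/m$, that is, $k_{x_0}\le |m|$. Since $m$ is quasi-polynomial in $n$, $|m|$ is polynomial in $|n|$.

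\textbf{Step 3: the partial assignment.} Let $q$ be the partial assignment that fixes the $k_{x_0}$ variables of $t_{x_0}$ so as to satisfy all its literals. This is possible because $C_{x_0}\neq\emptyset$. Then every total $s$ extending $q$ lies in $C_{x_0}$, so $m_s=x_0$, and $q$ has size at most $|m|$, which is polynomial in $|n|$, as Question~\ref{ordering problem} requires.

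\textbf{Main obstacle.} The step needing the most care is Step 1: one must check that a machine making at most one query really yields a one-literal dependence for each comparison, including the degenerate cases (no query, or an output that ignores the answer), which are absorbed as constants. Beyond that, one only has to make sure that the counting identity in Step 2 is a genuinely arithmetical statement. It is a true $\Pi$-statement about all finite parameters, so it transfers to $\M$ with the nonstandard $n$ and $m$. This is exactly the situation that Corollary~\ref{line of attack} then uses to conclude $q\Vdash\MIN_4(\varphi,m)$.
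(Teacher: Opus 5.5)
Your proof is correct, and it takes a genuinely different and much shorter route than the paper's.

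The paper proves, by induction on $|X|$, that for every $X\subseteq[m]$ and every $p$ there is $q\supseteq p$ with $|q|\le|p|+C\log_2|X|$. It first reduces to the case where $p$ decides no comparison inside $X$. It then shows $v_{a,b}=v_{b,a}$ and proves the Triangle Lemma. Finally, using a pivot, a partition by shared variables and a double-counting argument on a grid, it finds a single variable whose value eliminates a $\frac16$-fraction of the candidates for the minimum, and recurses.

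You bypass all of this with one observation. Every comparison is a constant or a literal, so each event ``$x=m_s$'' is exactly a subcube. These subcubes partition the cube, and $\sum_x 2^{-k_x}=1$ yields one of codimension at most $\log_2 m$. Your route has several advantages:
\begin{itemize}
\item It needs neither $v_{a,b}=v_{b,a}$ nor the Triangle Lemma.
\item It relativizes verbatim to an arbitrary starting condition $p$ and subset $X\subseteq[m]$: count inside the subcube of $p$. This gives the paper's stronger statement with $C=1$.
\item The bound $\lfloor\log_2 m\rfloor$ is optimal. Order $[2^k]$ by $x\oplus s$, deciding $x\trig y$ with one query to the bit at the highest position where $x,y$ differ. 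The minimum is then $s$ itself, read as a $k$-bit number, and fixing it requires all $k$ bits.
\end{itemize}

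What the paper's elimination strategy offers in exchange is a template not tied to the minimum-events being subcubes. With two or more queries, ``$x=m_s$'' becomes a conjunction of depth-$k$ decision trees. Your counting still produces some $x$ of probability at least $1/m$, but that no longer yields a short certificate. This is precisely where the general Question~\ref{ordering problem} remains open, and where the author hopes a generalized Triangle Lemma would help.

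Your transfer to $\M$ is also fine. The statement is arithmetical rather than literally a $\Pi$-sentence, but true arithmetic in $\M$ is all that is needed. The paper's induction on $|X|$ inside $\M$ relies on the same principle.
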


\begin{proof}
    We prove a slightly stronger statement. Concretely, we show that there is a standard constant $C$ such that, for any subset $X \subseteq [m]$ and any partial assignment $p$, there exists a partial assignment $q$ extending $p$ of size
    \[
        |q| \leq |p| + C\log_2 |X|
    \]
    so that, for all total assignments agreeing with $q$, the orderings defined by $T[\alpha](\overline{a}, x, y)$ and restricted to $X$ share the same minimal element. Here $|X|$ denotes the cardinality of $X$, and $|p|$ denotes the number of variables assigned by $p$. Note that $\log_2|X|$ is polynomial in $|n|$.

    We choose $C$ large enough so that
    \[
        3 \leq C \cdot \log_2 3,
    \]
    and
    \[
        1 + C\log_2\left(\frac{5}{6}N + \frac{1}{2}\right) \leq C\log_2 N
    \]
    for all $N \geq 4$.

    We proceed by complete induction on $|X|$, with the base cases $|X| \in \{0, 1, 2\}$ trivial, and the case $|X| = 3$ following from the assumption $3 \leq C \cdot \log_2 3$.

    Let $x \trig y$ be shorthand for $T[\alpha](\overline{a}, x, y)=1$. We borrow the forcing notation and, given a partial assignment $p$ and a pair of numbers $a,b$, denote $p \Vdash a \trig b$ if and only if, for all total assignments $s$ agreeing with $p$, it holds that
    \[
        T[\alpha_s](\overline{a}, a, b)=1.
    \]

    \stepmark{Step 1.}
    Assume that, for some distinct $a,b\in X$, $p\Vdash a\trig b$. By induction, there exists a number $c\in X\setminus\{b\}$ and an extension $q$ of $p$ of size
    \[
        |q|\leq |p|+C\log_2(|X|-1)
    \]
    so that $q\Vdash c\trig d$ for all $d\in X\setminus\{b\}$ distinct from $c$. It follows that $q\Vdash c\trig b$ as well, since $q\Vdash c\trig a$, $p\Vdash a\trig b$, and $p$ forces transitivity of $\trig$ on the whole set $[m]$. Thus $c$ is forced to be the minimum of $X$.

    For the remainder of the proof, we assume that $p$ does not force $a\trig b$ for any two distinct members of $X$.

    \stepmark{Step 2.}
    For distinct $a,b\in X$, by the previous discussion, there exists a single variable that $T$ queries so as to compute the output on $a,b$. Denote such a variable by $v_{a,b}$. Since $p$ does not decide either direction between $a$ and $b$, both computations on $(a,b)$ and $(b,a)$ must depend on their unique oracle query. Moreover, because $p$ forces exactly one of $a\trig b$ and $b\trig a$, these two queried variables must coincide. Otherwise, one could choose the two oracle answers independently and make the two computations agree, contradicting the fact that $p$ forces a strict linear order. Hence $v_{a,b}=v_{b,a}$, and
    \begin{align*}
        p \cup \{v_{a, b} = 1\} \Vdash (a \trig b)
        \quad \text{and} \quad
        p \cup \{v_{a, b} = 0\} \Vdash (b \trig a),
    \end{align*}
    or
    \begin{align*}
        p \cup \{v_{a, b} = 1\} \Vdash (b \trig a)
        \quad \text{and} \quad
        p \cup \{v_{a, b} = 0\} \Vdash (a \trig b).
    \end{align*}

    \stepmark{Step 3.}
    \begin{lemma*}[Triangle Lemma]
    \label{triangle lemma}
        For any triplet of distinct $a,b,c\in X$, it holds that
        \[
            |\{v_{a,b}, v_{a,c}, v_{b,c}\}| \leq 2.
        \]
    \end{lemma*}

    Suppose otherwise that the three variables $v_{a,b}$, $v_{a,c}$, and $v_{b,c}$ were distinct. Since none of the pairwise comparisons is decided by $p$, and since each comparison depends on its corresponding variable, we could assign these three variables independently so as to force
    \[
        a\trig b,\quad b\trig c,\quad c\trig a.
    \]
    This contradicts the fact that every total assignment extending $p$ defines a strict linear order. This proves the Triangle Lemma.

    \stepmark{Step 4.}
    Fix an arbitrary $a\in X$, and denote $v_{a,b}$ simply by $v_b$ for $b\in X\setminus\{a\}$. We define the following binary relation $\sim$ on $X\setminus\{a\}$:
    \[
        b\sim c \quad \text{if and only if} \quad v_b=v_c.
    \]
    Since $\sim$ is an equivalence relation, it induces a partition
    \[
        X\setminus\{a\}=\bigsqcup_i X_i.
    \]

    \stepmark{Step 5.}
    Assume that some component $X_i$ has cardinality at least $\frac{1}{3}|X|$. Let $v$ denote $v_b$ for an arbitrary $b\in X_i$. By averaging, there is $s\in\{0,1\}$ so that, for at least half of the elements $c\in X_i$,
    \[
        p\cup\{v=s\}\Vdash a\trig c.
    \]
    Denote the set of such elements $c$ by $X'$. It follows that
    \[
        |X'|\geq \frac{1}{6}|X|.
    \]
    By induction, we can extend $p\cup\{v=s\}$ to a condition $q$ of size
    \[
        |q|
        \leq
        |p|+1+C\log_2\left(\frac{5}{6}|X|\right)
        \leq
        |p|+C\log_2|X|
    \]
    which forces the $\trig$-minimality of some $d\in X\setminus X'$. It follows that $q\Vdash d\trig a$, or $d=a$, and hence such $d$ is $\trig$-minimal on the whole set $X$ as well.

    For the remainder of the proof, we assume that all of the components $X_i$ have cardinality less than $\frac{1}{3}|X|$.

    \stepmark{Step 6.}
    Given distinct $X_i,X_j$, and arbitrary $b\in X_i$, $c\in X_j$, the Triangle Lemma implies
    \[
        v_{b,c}\in \{v_b,v_c\}.
    \]
    We then define the following grid $G$, indexed by ordered pairs of elements of $X\setminus\{a\}$:
    \[
        G_{b,c} =
        \begin{cases}
        *, & b \sim c;
        \\
        0, & v_{b,c} = v_b (= v_{a,b});
        \\
        1, & v_{b,c} = v_c (= v_{a,c}).
        \end{cases}
    \]
    Denote by $G_b$ and $G^c$ the row and column of $G$ corresponding to $b$ and $c$, respectively.

    \stepmark{Step 7.}
    Assume that some row $G_b$ contains at least $\frac{1}{3}|X|$ many $0$s. Let $X'$ denote the set of all $c$ such that $G_{b,c}=0$. For any $c\in X'$, it holds that $v_{b,c}=v_b$. We can then proceed similarly to Step 5: find an $s\in\{0,1\}$ such that $p\cup\{v_b=s\}$ forces $b\trig c$ for at least half of the elements of $X'$, and apply induction to $X\setminus X'$, whose size is at most $\frac{5}{6}|X|$.

    For the remainder of the proof, we assume that every row of $G$ contains fewer than $\frac{1}{3}|X|$ many $0$s.

    \stepmark{Step 8.}
    Note that each row and each column of $G$ contains fewer than $\frac{1}{3}|X|$ many $*$s. Let $o$ denote the maximum number of $1$s contained in a single column of $G$. By double-counting the number of $1$s in $G$, and using the assumptions that every row contains fewer than $\frac{1}{3}|X|$ many $0$s and fewer than $\frac{1}{3}|X|$ many $*$s, it follows that
    \[
        |X \setminus \{a\}| \cdot o > |X \setminus \{a\}| \cdot (|X \setminus \{a\}| - \frac{2}{3}|X|),
    \]
    which implies $o > \frac{1}{3}|X| - 1$.

    The column version of Step 7 now applies: if a column $G^c$ contains at least $\frac{1}{3}|X| - 1$-many $1$s, then we can fix $v_c$ appropriately to $s$ to force $c\trig b$ for at least $\frac{1}{6}|X| - \frac{1}{2}$-many $b$s. By induction, we can extend $p\cup\{v_c=s\}$ to a condition $q$ of size
    \[
        |q|
        \leq
        |p|+1+C\log_2\left(\frac{5}{6}|X| + \frac{1}{2}\right)
        \leq
        |p|+C\log_2|X|,
    \]
    finishing the proof.
\end{proof}

Unfortunately, we are unable to extend the above proof method to the general case needed for resolving Question~\ref{ordering problem}. We believe that the crucial missing detail is a more general analogue of the Triangle Lemma.

Below, we formulate a stronger, but arguably more natural, condition on orderings defined by polynomial-time oracle Turing machines that is enough to resolve Question~\ref{ordering problem}. This condition may be of independent interest.

\begin{definition}
    Let $T[\alpha](\overline{x}, x, y)$, $n$, $m$, and $\overline{a}$ be as in Question~\ref{ordering problem}. We say that $T[\alpha](\overline{a}, x, y)$ \deff{factorizes on} $[m]$ if and only if there is a polynomial-time oracle Turing machine $f[\alpha](\overline{x}, x)$ satisfying the following properties:
    \begin{itemize}
        \item there is a single number $u$ quasi-polynomial in $n$ such that
        \[
            f[\alpha_s](\overline{a}, b) < u
        \]
        for all total assignments $s$ and all $b \in [m]$;
        \item for any total assignment $s$, the function
        \[
            b \mapsto f[\alpha_s](\overline{a}, b)
        \]
        is injective on $[m]$;
        \item for any distinct numbers $a,b\in[m]$, and any total assignment $s$,
        \[
            T[\alpha_s](\overline{a}, a, b) = 1
        \]
        if and only if
        \[
            f[\alpha_s](\overline{a}, a) < f[\alpha_s](\overline{a}, b).
        \]
    \end{itemize}
\end{definition}

We choose the name because the computation of $T[\alpha](\overline{a},a,b)$ \textit{factorizes} into two independent computations $f[\alpha](\overline{a},a)$ and $f[\alpha](\overline{a},b)$.

\begin{proposition}
    Question~\ref{ordering problem} has a positive answer assuming that $T$ factorizes on $[m]$.
\end{proposition}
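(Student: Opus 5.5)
The plan is to fix the minimum in a single step by choosing a pair $(b,s)$ that makes $f$ as small as possible over \emph{all} total assignments at once. Factorization turns the question ``which element is $\trig$-minimal'' into ``which element has the smallest $f$-value''. Each value $f[\alpha_s](\overline{a}, b)$ is computed by a polynomial-time oracle machine that reads only polynomially many oracle bits. Hence it suffices to pin down one value $f(b)$ that is globally smallest over all assignments, together with the oracle bits it reads.

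Step 1: the global minimum. Consider the set of all values $f[\alpha_s](\overline{a}, b)$, where $s$ ranges over all total assignments (extending $p$, if we work with a general $p$ rather than $p=\emptyset$) and $b \in [m]$. By the first bullet of the factorization definition, this is a nonempty set of numbers below $u$. In the intended application we work internally in $\M$, where total assignments are coded numbers and the set is definable. So by the least number principle in $\M$ (a model of true arithmetic) it has a least element $v^*$. Fix a witnessing pair $(b^*, s^*)$ with $f[\alpha_{s^*}](\overline{a}, b^*) = v^*$.

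Step 2: the condition $q$. Let $q$ be the partial assignment consisting of $p$ together with the answers given by $s^*$ to all oracle queries made during the computation of $f[\alpha_{s^*}](\overline{a}, b^*)$. The entries of $\overline{a}$ are polynomial in $|n|$, and $b^* < m$ with $m$ quasi-polynomial in $n$, so $|b^*|$ is polynomial in $|n|$. The running time of $f$, and hence the number of queries, is therefore polynomial in $|n|$, and $|q|$ is polynomial in $|n|$. (For the reduction to Question~\ref{ordering problem} we may take $p=\emptyset$.)

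Step 3: verification. Let $s$ be any total assignment agreeing with $q$. The computation of $f[\alpha_s](\overline{a}, b^*)$ receives the same oracle answers as under $s^*$, so it runs identically and outputs $v^*$. For any $b \neq b^*$ in $[m]$, minimality of $v^*$ gives $f[\alpha_s](\overline{a}, b) \geq v^*$. Injectivity of $b \mapsto f[\alpha_s](\overline{a}, b)$ for this fixed $s$ upgrades this to strict inequality. By the third bullet, $T[\alpha_s](\overline{a}, b^*, b) = 1$ for every $b \neq b^*$, so $m_s = b^*$ for every $s$ agreeing with $q$. This answers Question~\ref{ordering problem} positively, and via Corollary~\ref{line of attack} gives $q \Vdash \MIN_4(\varphi, m)$.

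The main point needing care is Step 1. The minimum must be taken over all total assignments simultaneously, not just over a single one, since only this makes the bound $f(b) \geq v^*$ uniform in $s$. For the same reason, the minimum must exist inside $\M$, which uses that $\M$ satisfies the full least number principle for formulas quantifying over coded assignments. Everything else reduces to the observation that a computation is fixed once all oracle bits it reads are fixed.
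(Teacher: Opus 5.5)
Your proof is correct and follows the paper's argument: take the least value of $f$ over all total assignments and all $b<m$, then fix the polynomially many oracle bits read by one computation attaining it. You also spell out details the paper leaves implicit, namely the polynomial bound on $|q|$ coming from the running time of $f$, and the use of injectivity to make $f(b^*)<f(b)$ strict for every other $b$.
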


\begin{proof}
    Since the image of $f[\alpha](\overline{a}, x)$ when varying $\alpha_s$ and $b < m$ is bounded by $u$, it contains the smallest element $m_0$. It is then enough to pick condition $q \Vdash f[\alpha](\overline{a}, b) = m_0$ for some $b$.
\end{proof}

\begin{corollary}
    $\I[G]$ satisfies $\forall u \: \MIN(\varphi,u)$ for all $\Dalpha(\I)$-formulas $\varphi$ which factorize in $\M$.
\end{corollary}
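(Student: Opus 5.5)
The plan is to combine the standard forcing density argument with Corollary~\ref{line of attack} and the preceding Proposition. I fix an arbitrary condition $p$, a number $m \in \I$, and a $\Dalpha(\I)$-formula $\varphi(x,y)$ that factorizes in $\M$. By the Truth Lemma (Theorem~\ref{forcing theorem}) and genericity, it suffices to show that the set of conditions forcing $\MIN(\varphi,m)$ is dense. Quantifying over all $u \in \I$ then costs nothing, since $\M[G]$ restricted to $\I$ only sees such $m$.

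\textbf{Step 1: reduce to the linear-order case.} As in the discussion after Question~\ref{main question}, if $p \nVdash \bigwedge_{i=0}^3 \neg\MIN_i(\varphi,m)$, then some $q \preceq p$ forces one of $\MIN_0,\dots,\MIN_3$ and we are done. So I assume $p$ forces all four negations. Each $\neg\MIN_i$ is $\Pi^b_1(\alpha)$ with parameters in $\I$, because $\varphi$ is $\Dalpha$ and we may use whichever of the $\Sigma^b_1$ or $\Pi^b_1$ forms is needed. The first part of Corollary~\ref{line of attack}, which rests on the Transfer Lemma~\ref{transfer lemma}, then gives that for every internally coded total assignment $s$ extending $p$, the machine $T[\alpha_s](\overline a,x,y)$ from Theorem~\ref{buss} defines a strict linear order on $[m]$.

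\textbf{Step 2: apply factorization.} Since $\varphi$ factorizes in $\M$, there is a polynomial-time oracle machine $f$ with values below some quasi-polynomial $u$ that is injective on $[m]$ and represents $\trig$ as $f(a)<f(b)$. This holds for all total assignments, in particular for all internally coded ones extending $p$, which is the only range the Transfer Lemma needs. The task is now the preceding Proposition relativized to $p$. Working inside $\M$, the set of values $f[\alpha_s](\overline a,b)$ with $s \supseteq p$ internally coded and $b<m$ is a definable nonempty subset of $[u]$. So it has a least element $m_0$, attained by some $s$ and some $b_0$.

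\textbf{Step 3: fix the minimum with a small condition.} Let $q$ be $p$ together with the at most polynomially many (in $|n|$) oracle answers of $s$ queried during the computation of $f[\alpha_s](\overline a,b_0)$. This is a condition in $\forcingFrame$ with $q \preceq p$. Every internally coded $s' \supseteq q$ reproduces that computation, so $f[\alpha_{s'}](\overline a,b_0)=m_0$. By minimality of $m_0$ and injectivity, $f[\alpha_{s'}](\overline a,c)>m_0$ for all $c\neq b_0$, hence $T[\alpha_{s'}](\overline a,b_0,c)=1$. Thus $\M \vDash ``q \Vdash^*_{\mathcal D} \forall y<m\,(b_0=y)\lor\varphi(b_0,y)"$, and the second part of Corollary~\ref{line of attack} yields $q \Vdash \MIN_4(\varphi,m)$.

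\textbf{Main obstacle.} The delicate step is matching the minimization in Step 2 to the class $\mathcal D$ and to extensions of $p$, not to all assignments as in the Proposition. Taking the minimum over all assignments could produce an $s$ inconsistent with $p$. I avoid this by minimizing only over internally coded $s \supseteq p$, which is a definable set in $\M$, so the least element exists in $\M$. I also have to check that $q$ has size polynomial in $|n|$. This follows because $b_0<m\in\I$ and the parameters lie in $\I$, so the runtime of $f$ is polynomial in $|n|$.
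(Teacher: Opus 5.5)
Your proposal is correct and follows essentially the paper's route: you reduce via Corollary~\ref{line of attack} and the Transfer Lemma to the combinatorial Question~\ref{ordering problem}, and then, as in the Proposition, you take the least value $m_0$ of $f$ and fix the polynomially many oracle answers of one computation attaining it. The only difference is that you minimize over internally coded assignments extending $p$ instead of reducing to $p=\emptyset$ without loss of generality; this is just the explicit form of the paper's reduction.
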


\begin{hypothesis}[Factorization Hypothesis]
\label{factorization hypothesis}
    Any $T[\alpha](\overline{x}, x, y)$ as in Question~\ref{ordering problem} factorizes.
\end{hypothesis}

It is important to keep in mind that $f$ does not need to calculate the exact index of the corresponding number in the given ordering. In other words, $f$ need not be surjective on $[m]$.
 
Below, we give an example of $T$ that factorizes, but for which no polynomial-time indexing function can be constructed.

\begin{example}
    We define $T[\alpha](x,y)$ for $m=\binom{n}{2}$ as follows. Given distinct $a,b\in[m]$, the machine $T[\alpha](a,b)$ queries the variables encoded by $a$ and $b$. If both queries result in the same answer, the numbers are ordered by the usual numerical ordering. Otherwise, the number with query answer $0$ is treated as the smaller.

    To see that such $T$ factorizes, we construct $f$ as follows:
    \[
        f[\alpha](a) =
        \begin{cases}
        a, & \text{if the query } a \mapsto ? \text{ is answered } 0;
        \\
        a + m, & \text{if the query } a \mapsto ? \text{ is answered } 1.
        \end{cases}
    \]

    To see that no polynomial-time oracle machine can compute the exact index of $a$ in the induced ordering, note that the exact rank of $a$ depends on the number of indices whose oracle answers are $0$ or $1$. For $a$ near $\lfloor m/2\rfloor$, changing any one of linearly many still-unqueried oracle bits can change this rank. Since $m=\binom n2$ is much larger than any polynomial in $|n|$, no algorithm making only polynomially many queries in $|n|$ can determine the exact rank uniformly.
\end{example}

\bibliographystyle{plain}
\bibliography{main}

\end{document}